\theoremstyle{plain}% Theorem-like structures provided by amsthm.sty
\newtheorem{theorem}{Theorem}[section]
\newtheorem{lemma}[theorem]{Lemma}
\theoremstyle{definition}
\theoremstyle{remark}
\newtheorem{remark}{Remark}
\def\R{\mathbb{R}}
\begin{document}

%\articletype{ARTICLE TEMPLATE}% Specify the article type or omit as appropriate

\title{Boundary Output Feedback Stabilization of Reaction-Diffusion PDEs with Delayed Boundary Measurement}

\author{
\name{Hugo Lhachemi\textsuperscript{a}\thanks{Email: hugo.lhachemi@centralesupelec.fr} and Christophe Prieur\textsuperscript{b}}
\affil{\textsuperscript{a}Universit{\'e} Paris-Saclay, CNRS, CentraleSup{\'e}lec, Laboratoire des signaux et syst{\`e}mes, 91190, Gif-sur-Yvette, France; \textsuperscript{b}Universit{\'e} Grenoble Alpes, CNRS, Grenoble-INP, GIPSA-lab, F-38000, Grenoble, France}
}

\maketitle

\begin{abstract}
This paper addresses the boundary output feedback stabilization of general 1-D reaction-diffusion PDEs with delayed boundary measurement. The output takes the form of a either Dirichlet or Neumann trace. The output delay can be arbitrarily large. The control strategy is composed of a finite-dimensional observer that is used to observe a delayed version of the first modes of the PDE and a predictor component which is employed to obtain the control input to be applied  at current time. For any given value of the output delay, we assess the stability of the resulting closed-loop system provided the order of the observer is selected large enough. Taking advantage of this result, we discuss the extension of the control strategy to the case of simultaneous input and output delays.
\end{abstract}

\begin{keywords}
Reaction-diffusion PDEs, output feedback, delayed measurement, boundary control.
\end{keywords}

\section{Introduction}\label{sec: Introduction}

Time delays commonly arise in the design of control strategies due to either natural feedback processes or the active implementation of control laws. Moreover, time delays are well-known for their capability to introduce instabilities when not considered properly in the control design. For these reasons, the feedback control of finite-dimensional systems in the presence of delays has been extensively studied \citep{artstein1982linear,richard2003time}. The extension of this problematic to Partial Differential Equations (PDEs) has been the topic of a number of papers in the recent years \citep{nicaise2008stabilization,wang2018delay}. In particular, the development of control strategies for the feedback stabilization of reaction-diffusion PDEs with an arbitrarily long delay in the either control input \citep{katz2021sub,krstic2009control,lhachemi2019lmi,lhachemi2020feedback,lhachemi2021robustness,lhachemi2021predictor,qi2020compensation} or state \citep{hashimoto2016stabilization,kang2017boundary,lhachemi2020boundary,lhachemi2021boundary} has been intensively studied.

In this paper, we address the boundary output feedback stabilization of general 1-D reaction-diffusion PDEs with delayed boundary measurement. The control input and boundary conditions take the form of Dirichlet/Neumann/Robin boundary conditions. The output is selected as a either Dirichlet or Neumann boundary trace presenting an arbitrarily long delay. The control strategy couples a finite-dimensional observer \citep{curtain1982finite,balas1988finite,harkort2011finite,grune2021finite} used to observe a finite number of modes of the PDE and a predictor component \citep{artstein1982linear,karafyllis2017predictor}. To design the finite-dimensional observer, we leverage the approach reported first in \citep{katz2020constructive} relying on spectral-reduction methods \citep{russell1978controllability,coron2004global,coron2006global}, and more specifically on the scaling-based procedures described in \citep{lhachemi2020finite,lhachemi2021nonlinear} that allow to handle Dirichlet/Neumann boundary measurement while performing, for very general 1-D reaction-diffusion PDEs, the control design directly with the actual control input $u$ and not its time-derivative $v = \dot{u}$. We refer the reader, e.g., to~\citep[Sec.~3.3.]{curtain2012introduction} for a general introduction to the topic of boundary control systems. 

It is worth noting that the robustness of the finite-dimensional control strategy reported in \citep{katz2020constructive} to small enough input and measurement delays was discussed in \citep{katz2021delayed}. However, the presence of an arbitrarily long output delay imposes more stringent constraints on the system and requires the development of a dedicated control strategy. This is achieved in this paper by leveraging a predictor design \citep{karafyllis2017predictor,deng2019prediction}. The possibility to couple a finite-dimensional observer with a predictor to handle arbitrary input delays was reported first in \citep{katz2021sub} in the very specific configuration of a Neumann boundary control, for a bounded output operator, and for system trajectories evaluated in $L^2$ norm. The case of general input delayed 1-D reaction-diffusion PDEs with Dirichlet/Neumann/Robin boundary control and Dirichlet/Neumann boundary measurement was solved in \citep{lhachemi2021predictor} for PDE trajectories in $H^1$ norm. In this paper we address the dual problem of \citep{lhachemi2021predictor}, namely the output feedback stabilization of reaction-diffusion PDEs in the presence of an arbitrary output delay. The proposed control strategy is composed of a finite-dimensional observer that is used to observe a delayed version of the first modes of the PDE (this delayed observation matches with the measurement delay) and a predictor component which is employed to obtain the control input to be applied  at current time. For a given value of the output delay, we derive a set of sufficient LMI conditions ensuring the exponential stability of the resulting closed-loop system for PDE trajectories evaluated in $H^1$ norm. For any given value of the output delay, these control design constraints are shown to be feasible provided the order of the observer is selected large enough. Combining the approach developed in this paper with the one reported in \citep{lhachemi2021predictor} for the case of an input delay, we also discuss the extension of the method to the stabilization of reaction-diffusion PDEs in the presence of both input and output delays.

The paper is organized as follows. After introducing some definitions and properties, the control design problem addressed in this paper is presented in Section~\ref{sec: preliminaries}. The case of a delayed Dirichlet boundary measurement is reported in Section~\ref{sec: Dirichlet}. The control design procedure is then extended to delayed Neumann boundary measurement in Section~\ref{sec: Neumann}. A numerical illustration of these two settings is presented in Section~\ref{sec: numerics}. The extension of the obtained results to the case of input and output delays is discussed in Section~\ref{sec: extension}. Finally, concluding remarks are formulated in Section~\ref{sec: conclusion}.

\section{Definitions and problem setting}\label{sec: preliminaries}

\subsection{Definitions and properties}

\subsubsection{Notation}

Spaces $\R^n$ are equipped with the Euclidean norm denoted by $\Vert\cdot\Vert$. The associated induced norms of matrices are also denoted by $\Vert\cdot\Vert$. For any two vectors $X$ and $Y$ of arbitrary dimensions, $ \mathrm{col} (X,Y)$ stands for the vector $[X^\top,Y^\top]^\top$. $L^2(0,1)$ stands for the space of square integrable functions on $(0,1)$ and is endowed with the inner product $\langle f , g \rangle = \int_0^1 f(x) g(x) \,\mathrm{d}x$. The corresponding norm is denoted by $\Vert \cdot \Vert_{L^2}$. For an integer $m \geq 1$, $H^m(0,1)$ stands for the $m$-order Sobolev space and is endowed with its usual norm $\Vert \cdot \Vert_{H^m}$. For any symmetric matrix $P \in\R^{n \times n}$, $P \succeq 0$ (resp. $P \succ 0$) indicates that $P$ is positive semi-definite (resp. positive definite).

\subsubsection{Properties of Sturm-Liouville operators}

Let $\theta_1,\theta_2\in[0,\pi/2]$, $p \in \mathcal{C}^1([0,1])$ and $q \in \mathcal{C}^0([0,1])$ with $p > 0$ and $q \geq 0$. Let the Sturm-Liouville operator $\mathcal{A} : D(\mathcal{A}) \subset L^2(0,1) \rightarrow L^2(0,1)$ be defined by $\mathcal{A}f = - (pf')' + q f$ on the domain $D(\mathcal{A}) = \{ f \in H^2(0,1) \,:\, c_{\theta_1} f(0) - s_{\theta_1} f'(0) = c_{\theta_2} f(1) + s_{\theta_2} f'(1) = 0 \}$. Here we use the short notations $c_{\theta_i} = \cos\theta_i$ and $s_{\theta_i} = \sin\theta_i$. It is well-known that the eigenvalues $\lambda_n$, $n \geq 1$, of $\mathcal{A}$ are simple, non negative, and form an increasing sequence with $\lambda_n \rightarrow + \infty$ as $n \rightarrow + \infty$. Moreover the corresponding unit eigenvectors $\phi_n \in L^2(0,1)$ form a Hilbert basis. The domain of the operator $\mathcal{A}$ is equivalently characterized in terms of the above eigenstructures by $D(\mathcal{A}) = \{ f \in L^2(0,1) \,:\, \sum_{n\geq 1} \vert \lambda_n \vert ^2 \vert \left< f , \phi_n \right> \vert^2 < +\infty \}$. Introducing $p_*,p^*,q^* \in \R$ so that $0 < p_* \leq p(x) \leq p^*$ and $0 \leq q(x) \leq q^*$ for all $x \in [0,1]$, we have 
$
0 \leq \pi^2 (n-1)^2 p_* \leq \lambda_n \leq \pi^2 n^2 p^* + q^*
$
for all $n \geq 1$ (see, e.g., \cite{orlov2017general}). Furthermore, with the additional assumption $p \in \mathcal{C}^2([0,1])$, we also have that $\phi_n (\xi) = O(1)$ and $\phi_n' (\xi) = O(\sqrt{\lambda_n})$ as $n \rightarrow + \infty$ for any given $\xi \in [0,1]$ (see, e.g., \cite{orlov2017general}). Besides and under the assumption $q > 0$, an integration by parts and the continuous embedding $H^1(0,1) \subset L^\infty(0,1)$ show the existence of constants $C_1,C_2 > 0$ such that
\begin{align}
C_1 \Vert f \Vert_{H^1}^2 \leq 
\sum_{n \geq 1} \lambda_n \left< f , \phi_n \right>^2
= \left< \mathcal{A}f , f \right>
\leq C_2 \Vert f \Vert_{H^1}^2 \label{eq: inner product Af and f}
\end{align}
for all $f \in D(\mathcal{A})$. The latter inequalities and the Riesz-spectral nature of $\mathcal{A}$ imply that the series expansion $f = \sum_{n \geq 1} \left< f , \phi_n \right> \phi_n$ holds in $H^2(0,1)$ norm for any $f \in D(\mathcal{A})$. Invoking again the continuous embedding $H^1(0,1) \subset L^{\infty}(0,1)$, we deduce that $f(0) = \sum_{n \geq 1} \left< f , \phi_n \right> \phi_n(0)$ and $f'(0) = \sum_{n \geq 1} \left< f , \phi_n \right> \phi_n'(0)$. 

In the sequel, we define for any integer $N \geq 1$ and any $f \in L^2(0,1)$ the quantity $\mathcal{R}_N f = \sum_{n \geq N+1} \left< f , \phi_n \right> \phi_n$.

\subsection{Problem setting and spectral reduction}

\subsubsection{Problem setting}

Let the reaction-diffusion system with boundary control be described by
\begin{subequations}\label{eq: PDE}
\begin{align}
& z_t(t,x) = (p(x) z_x(t,x))_x - \tilde{q}(x) z(t,x) \\
& c_{\theta_1} z(t,0) - s_{\theta_1} z_x(t,0) = 0 \\
& c_{\theta_2} z(t,1) + s_{\theta_2} z_x(t,1) = u(t) \\
& z(0,x) = z_0(x) 
\end{align}
\end{subequations}
for $t > 0$ and $x \in (0,1)$. Here $\theta_1 , \theta_2 \in [0,\pi/2]$, $p \in\mathcal{C}^2([0,1])$ with $p > 0$, and $\tilde{q} \in\mathcal{C}^0([0,1])$. The state of the reaction-diffusion PDE at time $t$ is $z(t,\cdot)$, the command is $u(t)$, and the initial condition is $z_0$. We define the initial command as $u_0 = c_{\theta_2} z_0(1) + s_{\theta_2} z_{0,x}(1)$. For some measurement delay $h > 0$, the system output is chosen as the either delayed Dirichlet or delayed Neumann trace. More precisely, in the case $\theta_1 \in (0,\pi/2]$, the delayed Dirichlet boundary measurement is defined by 
\begin{equation}\label{eq: system output Dirichlet}
y_D(t) = 
\left\{
\begin{array}{ll}
z(t-h,0) , & t \geq h \\
y_0(t-h) , & 0 \leq t \leq h
\end{array}
\right.
\end{equation}
Similarly but in the case $\theta_1 \in [0,\pi/2)$, the delayed Neumann boundary measurement is defined by 
\begin{equation}\label{eq: system output Neumann}
y_N(t) = 
\left\{
\begin{array}{ll}
z_x(t-h,0) , & t \geq h \\
y_0(t-h) , & 0 \leq t \leq h
\end{array}
\right.
\end{equation}
In both cases, $y_0 : [-h,0] \rightarrow \R$ is the initial condition of the delayed boundary measurement and is assumed to be Lipschitz continuous.

Without loss of generality, we introduce $q \in\mathcal{C}^0([0,1])$ and $q_c \in\R$ so that
\begin{equation}\label{eq: writting of tilde_q}
\tilde{q}(x) = q(x) - q_c , \qquad q(x) > 0  .
\end{equation}

\subsubsection{Spectral reduction}

Based on the change of variable formula
\begin{equation}\label{eq: change of variable}
w(t,x) = z(t,x) - \frac{x^2}{c_{\theta_2} + 2 s_{\theta_2}} u(t) 
\end{equation}
the PDE (\ref{eq: PDE}) in original coordinates can be equivalently reformulated as the homogeneous PDE described by 
\begin{subequations}\label{eq: PDE Dirichlet - homogeneous}
\begin{align}
& v(t) = \dot{u}(t) \\
& w_t(t,x) = (p(x) w_x(t,x))_x - \tilde{q}(x) w(t,x) + a(x) u(t) + b(x) v(t) \\ 
& c_{\theta_1} w(t,0) - s_{\theta_1} w_x(t,0) = 0 \\
& c_{\theta_2} w(t,1) + s_{\theta_2} w_x(t,1) = 0 \\
& w(0,x) = w_0(x) 
\end{align}
\end{subequations}
Here we have $a(x) = \frac{1}{c_{\theta_2} + 2 s_{\theta_2}} \{ 2p(x) + 2xp'(x) - x^2 \tilde{q}(x) \}$, $b(x) = -\frac{x^2}{c_{\theta_2} + 2 s_{\theta_2}}$, and $w_0(x) = z_0(x) - \frac{x^2}{c_{\theta_2} + 2 s_{\theta_2}} u(0)$. Noting that $w(t,0) = z(t,0)$ and $w_x(t,0) = z_x(t,0)$, the boundary measurements are described for $t \geq h$ by
\begin{equation}\label{eq: measurement homogeneous coordinates}
y_D(t) = w(t-h,0) , \qquad y_N(t) = w_x(t-h,0) .
\end{equation}

Let us now define the coefficients of projection $z_n(t) = \left< z(t,\cdot) , \phi_n \right>$, $w_n(t) = \left< w(t,\cdot) , \phi_n \right>$, $a_n = \left< a , \phi_n \right>$, and $b_n = \left< b , \phi_n \right>$. Owing to (\ref{eq: change of variable}), we infer that that
\begin{equation}\label{eq: link z_n and w_n}
w_n(t) = z_n(t) + b_n u(t), \quad n \geq 1 .
\end{equation}
We now project the two PDEs representations (\ref{eq: PDE}) and (\ref{eq: PDE Dirichlet - homogeneous}) into the Hilbert basis $(\phi_n)_{n \geq 1}$. The former representation gives
\begin{equation}\label{eq: dynamics z_n}
\dot{z}_n(t) = (-\lambda_n + q_c) z_n(t) + \beta_n u(t)
\end{equation}
where $\beta_n = a_n + (-\lambda_n+q_c)b_n = p(1) \{ - c_{\theta_2} \phi_n'(1) + s_{\theta_2} \phi_n(1) \} = O(\sqrt{\lambda_n})$. The latter representation implies that
\begin{subequations}\label{eq: dynamics w_n}
\begin{align}
\dot{u}(t) & = v(t) \\
\dot{w}_n(t) & = (-\lambda_n + q_c) w_n(t) + a_n u(t) + b_n v(t) 
\end{align}
\end{subequations}
Finally the delayed measurements (\ref{eq: measurement homogeneous coordinates}) can be expressed for $t \geq h$ as the following series expansions:
\begin{equation}\label{eq: system output - series}
y_D(t) = \sum_{n \geq 1} w_n(t-h) \phi_n(0) , \qquad
y_N(t) = \sum_{n \geq 1} w_n(t-h) \phi_n'(0) .
\end{equation}

\section{Case of a delayed Dirichlet measurement}\label{sec: Dirichlet}

We address in this section the output feedback stabilization of the reaction-diffusion PDE described by (\ref{eq: PDE}) for $\theta_1 \in (0,\pi/2]$ with delayed Dirichlet measurement (\ref{eq: system output Dirichlet}).

\subsection{Control strategy}

Let $\delta > 0$ and $N_0 \geq 1$ be such that $-\lambda_n + q_c < - \delta < 0$ for all $n \geq N_0 + 1$. Let $N \geq N_0 + 1$ be arbitrarily fixed and that will be specified later. Inspired by \cite[Chap.~3]{karafyllis2017predictor} in the context of finite-dimensional systems, we first design an observer that is used to estimate from the delayed measurement $y_D(t)$ the $N$ first modes $z_n(t-h)$ of the PDE at time $t-h$. The observer dynamics reads, for $t \geq 0$,  
\begin{subequations}\label{eq: controller - Dirichlet}
\begin{align}
\hat{w}_n(t) & = \hat{z}_n(t) + b_n u(t-h) \\
\dot{\hat{z}}_n(t) & = (-\lambda_n+q_c) \hat{z}_n(t) + \beta_n u(t-h) \label{eq: controller 1 - Dirichlet} \\
& \phantom{=}\; - l_n \left\{ \sum_{k=1}^N \hat{w}_k(t) \phi_k(0) - y_D(t) \right\}  ,\; 1 \leq n \leq N_0 \nonumber \\
\dot{\hat{z}}_n(t) & = (-\lambda_n+q_c) \hat{z}_n(t) + \beta_n u(t-h) ,\; N_0+1 \leq n \leq N \label{eq: controller 2 - Dirichlet}
\end{align}
\end{subequations}
where $l_n \in\R$ are the observer gains and with $u(\tau) = u_0$ for $\tau \leq 0$. So $\hat{z}_n(t)$ is seen as the estimation of $z_n(t-h)$ for times $t \geq h$. Note that no control input is actually applied to the system (\ref{eq: PDE}) in negative time. The definition of $u$ in negative time is only introduced here in order to make sure that the dynamics (\ref{eq: controller - Dirichlet}) is well-defined for all $t \geq 0$.

Since the observer (\ref{eq: controller - Dirichlet}) estimates the first modes of the PDE at time $t-h$ while the feedback must be applied at current time $t$, we need to introduce a predictor component. Defining $\hat{Z}^{N_0} = \begin{bmatrix} \hat{z}_1 & \ldots & \hat{z}_{N_0} \end{bmatrix}^\top$ along with $A_0 = \mathrm{diag}(-\lambda_1+q_c,\ldots,-\lambda_{N_0}+q_c)$ and $\mathfrak{B}_0 = \begin{bmatrix} \beta_1 & \ldots & \beta_{N_0} \end{bmatrix}^\top$, we introduce the following Artstein tranformation:
\begin{equation}\label{eq: controller - Dirichlet - Artstein}
\hat{Z}_A^{N_0} (t) = e^{A_0 h}  \hat{Z}^{N_0}(t) + \int_{t-h}^t e^{A_0(t-s)} \mathfrak{B}_0 u(s) \,\mathrm{d}s.
\end{equation}
We can now define the control input as 
\begin{equation}\label{eq: command input - Dirichlet}
u(t) = K \hat{Z}_A^{N_0}(t) 
\end{equation}
for all $t \geq 0$ where $K\in\R^{1 \times N_0}$ is the feedback gain. 

\begin{remark}
The controller described by (\ref{eq: controller - Dirichlet}-\ref{eq: command input - Dirichlet}) takes a form similar to the one reported in \citep{lhachemi2021predictor} in the case of an input delay. However, due to the output delay considered in this paper, the measurement $y_D(t)$ appearing in (\ref{eq: controller 1 - Dirichlet}) is a time delayed version of the Dirichlet trace as described by (\ref{eq: system output Dirichlet}). Moreover, the delayed input $u(t-h)$ appearing in (\ref{eq: controller - Dirichlet}) is not reminiscent of an actual input delay, as the ones considered in \citep{lhachemi2021predictor}, but is due to the fact that $\hat{z}_n(t)$ does not estimate $z_n(t)$ but $z_n(t-h)$ for $t \geq h$, so that the measurement $y_D(t) = w(t-h,0)$ can indeed be used to design a classical Luenberger observer. We refer to \cite[Chap.~3]{karafyllis2017predictor} for general explanations of such a control design strategy in the context of output delayed finite-dimensional systems.
\end{remark}

\begin{remark}
Equations (\ref{eq: controller - Dirichlet - Artstein}-\ref{eq: command input - Dirichlet}) imply that the initial condition $\hat{Z}^{N_0}(0)\in\R^{N_0}$ of the $N_0$ first modes of the observer must be selected so that $u_0 = K \hat{Z}_A^{N_0}(0)$. For a given $u_0 \in \R$, the latter condition is equivalent to $K e^{A_0 h} \hat{Z}^{N_0}(0) = \left( 1 - \int_{-h}^0 K e^{- A_0 s} \mathfrak{B}_0 \,\mathrm{d}s \right) u_0$. This is possible as soon as $K \neq 0$.
\end{remark}

\begin{remark}\label{rmk: WP}
The well-posedness of the closed-loop system composed of the plant (\ref{eq: PDE}), the delayed Dirichlet measurement (\ref{eq: system output Dirichlet}), and the controller (\ref{eq: controller - Dirichlet}-\ref{eq: command input - Dirichlet}), is not trivial under this form due to the integral term $\varphi(t) = \int_{t-h}^{t} e^{A_0(t-s)} \mathfrak{B}_0 u(s) \,\mathrm{d}s$ appearing in (\ref{eq: controller - Dirichlet - Artstein}). However, it is observed that such a function $\varphi$ is the unique solution to the EDO 
\begin{equation}\label{eq: EDO varphi}
\dot{\varphi}(t) = A_0 \varphi(t) + \mathfrak{B}_0 K \hat{Z}_A^{N_0}(t) - e^{A_0 h} \mathfrak{B}_0 u(t-h)
\end{equation}
associated with the initial condition $\varphi(0) = \int_{-h}^{0} e^{-A_0 s} \mathfrak{B}_0 u_0 \,\mathrm{d}s$. Hence considering the infinite-dimensional system described by the plant (\ref{eq: PDE}), the delayed Dirichlet measurement (\ref{eq: system output Dirichlet}), the observer dynamics (\ref{eq: controller - Dirichlet}), the control input (\ref{eq: command input - Dirichlet}) with $\hat{Z}_A^{N_0}(t)$ define by 
\begin{equation*}
\hat{Z}_A^{N_0}(t) = e^{A_0 h} \hat{Z}^{N_0}(t) + \varphi(t) ,
\end{equation*}
along with the ODE (\ref{eq: EDO varphi}), the well-posedness in terms of classical solutions for initial conditions $z_0 \in H^2(0,1)$ and $\hat{z}_n(0) \in\R$ so that $c_{\theta_1} z_0(0) - s_{\theta_1} z_0'(0) = 0$ and $c_{\theta_2} z_0(1) + s_{\theta_2} z_0'(1) = u_0 = K \hat{Z}_A^{N_0}(0)$, and any Lipschitz continuous $y_0 \in \mathcal{C}^0([-h,0])$ so that $y_0(0) = z_0(0)$, is now an immediate consequence of \cite[Thm.~6.3.1 and~6.3.3]{pazy2012semigroups} and the use of a classical induction argument.
\end{remark}

\subsection{Truncated model for stability analysis}\label{subsec: truncated model}

In order to complete the tuning of the controller gains and to perform the stability analysis, we need to introduce first a finite dimensional model capturing the $N$ first modes of the PDE in $z$ coordinates (\ref{eq: PDE}) and the controller dynamics (\ref{eq: controller - Dirichlet}-\ref{eq: command input - Dirichlet}) based on the delayed Dirichlet measurement (\ref{eq: system output Dirichlet}). To do so we define the observation error of the $n$-th mode as $e_n(t) = z_n(t-h) - \hat{z}_n(t)$ for all $1 \leq n \leq N$ and all $t \geq h$. Defining $E^{N_0} = \begin{bmatrix} e_1 & \ldots & e_{N_0} \end{bmatrix}^\top$, the scaled error $\tilde{e}_n = \sqrt{\lambda_n} e_n$, and $\tilde{E}^{N - N_0} = \begin{bmatrix} \tilde{e}_{N_0 +1} & \ldots & \tilde{e}_{N} \end{bmatrix}^\top$, we obtain from (\ref{eq: controller 1 - Dirichlet}) and (\ref{eq: command input - Dirichlet}) that
\begin{align}
\dot{\hat{Z}}^{N_0}(t) & = A_0 \hat{Z}^{N_0}(t) + \mathfrak{B}_0 u(t-h) + LC_0 E^{N_0}(t) + L\tilde{C}_1 \tilde{E}^{N-N_0}(t) + L \zeta(t-h) \label{eq: truncated model - 4 ODEs - 1}
\end{align}
for all $t \geq h$. Defining the residue of measurement as $\zeta = \sum_{n \geq N+1} w_n \phi_n(0)$, we have $\zeta(t-h) = \sum_{n \geq N+1} w_n(t-h) \phi_n(0)$ for all $t \geq h$. The different matrices are defined by $C_0 = \begin{bmatrix} \phi_1(0) & \ldots & \phi_{N_0}(0) \end{bmatrix}$, $\tilde{C}_1 = \begin{bmatrix} \frac{\phi_{N_0 +1}(0)}{\sqrt{\lambda_{N_0 +1}}} & \ldots & \frac{\phi_{N}(0)}{\sqrt{\lambda_{N}}} \end{bmatrix}$, and $L = \begin{bmatrix} l_1 & \ldots & l_{N_0} \end{bmatrix}^\top$. Invoking the Artstein transformation (\ref{eq: controller - Dirichlet - Artstein}) and using (\ref{eq: command input - Dirichlet}) we infer that
\begin{align}
\dot{\hat{Z}}_A^{N_0}(t) & = ( A_0 + \mathfrak{B}_0 K ) \hat{Z}_A^{N_0}(t) + e^{A_0 h} LC_0 E^{N_0}(t) \label{eq: truncated model - 4 ODEs - 1 bis} \\
& \phantom{=}\; + e^{A_0 h} L\tilde{C}_1 \tilde{E}^{N-N_0}(t) + e^{A_0 h} L \zeta(t-h) \nonumber
\end{align}
for all $t \geq h$. Besides, the combination of (\ref{eq: dynamics z_n}) evaluated at time $t-h$ and (\ref{eq: truncated model - 4 ODEs - 1}) gives
\begin{equation}\label{eq: truncated model - 4 ODEs - 2}
\dot{E}^{N_0}(t) = ( A_0 - L C_0 ) E^{N_0}(t) - L \tilde{C}_1 \tilde{E}^{N-N_0}(t) - L \zeta(t-h)
\end{equation}
for all $t \geq h$.

Based on (\ref{eq: controller 2 - Dirichlet}) and defining the scaled estimation $\tilde{z}_n = \hat{z}_n / \lambda_n$ and $\tilde{Z}^{N-N_0} = \begin{bmatrix} \tilde{z}_{N_0 + 1} & \ldots & \tilde{z}_{N} \end{bmatrix}^\top$, we deduce that
\begin{equation*}
\dot{\tilde{Z}}^{N-N_0}(t) = A_1 \tilde{Z}^{N-N_0}(t) + \tilde{\mathfrak{B}}_1 u(t-h)
\end{equation*}
for $t \geq 0$ where $A_1 = \mathrm{diag}(-\lambda_{N_0+1} + q_c , \ldots , -\lambda_{N} + q_c)$ and $\tilde{\mathfrak{B}}_1 = \begin{bmatrix} \beta_{N_0 +1}/\lambda_{N_0 +1} & \ldots & \beta_N/\lambda_N \end{bmatrix}^\top$. Introducing the second Artstein tranformation:
\begin{equation}\label{eq: Dirichlet - Artstein bis}
\tilde{Z}_A^{N-N_0} (t) = e^{A_1 h}  \tilde{Z}^{N-N_0}(t) + \int_{t-h}^t e^{A_1(t-s)} \tilde{\mathfrak{B}}_1 u(s) \,\mathrm{d}s
\end{equation}
and owing to (\ref{eq: command input - Dirichlet}) we infer that
\begin{equation}\label{eq: truncated model - 4 ODEs - 3}
\dot{\tilde{Z}}_A^{N-N_0}(t) = A_1 \tilde{Z}_A^{N-N_0}(t) + \tilde{\mathfrak{B}}_1 K \hat{Z}_A^{N_0}(t) .
\end{equation}
Moreover, using (\ref{eq: dynamics z_n}) evaluated at time $t-h$ and (\ref{eq: controller 2 - Dirichlet}), the error dynamics reads
\begin{equation}\label{eq: truncated model - 4 ODEs - 4}
\dot{\tilde{E}}^{N-N_0}(t) = A_1 \tilde{E}^{N-N_0}(t)
\end{equation}
for all $t \geq h$.

Introducing the state vector
\begin{equation}\label{eq: truncated model - def X}
X = \mathrm{col}\left( \hat{Z}_A^{N_0} , E^{N_0} , \tilde{Z}_A^{N-N_0} , \tilde{E}^{N-N_0} \right) ,
\end{equation}
we obtain from (\ref{eq: truncated model - 4 ODEs - 1 bis}-\ref{eq: truncated model - 4 ODEs - 2}) and (\ref{eq: truncated model - 4 ODEs - 3}-\ref{eq: truncated model - 4 ODEs - 4}) that
\begin{equation}\label{eq: truncated model}
\dot{X}(t) = F X(t) + \mathcal{L} \zeta(t-h)
\end{equation}
for all $t \geq h$ where
\begin{equation*}
F =
\begin{bmatrix}
A_0 + \mathfrak{B}_0 K & e^{A_0 h} LC_0 & 0 & e^{A_0 h} L\tilde{C}_1 \\
0 & A_0 - L C_0 & 0 & - L\tilde{C}_1 \\
\tilde{\mathfrak{B}}_1 K & 0 & A_1 & 0 \\
0 & 0 & 0 & A_1
\end{bmatrix} , \qquad
\mathcal{L} =
\begin{bmatrix}
e^{A_0 h} L \\ -L \\ 0 \\ 0
\end{bmatrix} 
\end{equation*}
With $\tilde{X}(t) = \mathrm{col}\left( X(t) , \zeta(t-h) \right)$ and based on (\ref{eq: command input - Dirichlet}) and (\ref{eq: truncated model - 4 ODEs - 1}), we also have 
\begin{align}\label{eq: derivative v of command input u}
u(t) & = \tilde{K} X(t) , \; \forall t \geq h \;;
& v(t) & = \dot{u}(t) = K \dot{\hat{Z}}_A^{N_0}(t) , \; \forall t \geq 0  \\
& & & = E \tilde{X}(t) , \; \forall t \geq h \nonumber
\end{align}
with $\tilde{K} = \begin{bmatrix} K & 0 & 0 & 0 \end{bmatrix}$ and $E = K \begin{bmatrix} A_0 + \mathfrak{B}_0 K & e^{A_0 h} LC_0 & 0 & e^{A_0 h} L\tilde{C}_1 & e^{A_0 h} L \end{bmatrix}$.

\begin{remark}
The application of the Hautus test shows that the pairs $(A_0,\mathfrak{B}_0)$ and $(A_0,C_0)$ satisfy the Kalman condition. Hence one can always compute feedback and observer gains $K\in\R^{1 \times N_0}$ and $L\in\R^{N_0}$ so that $A_0 + \mathfrak{B}_0 K$ and $A_0 - L C_0$ are Hurwitz with arbitrary pole assignment.
\end{remark}

\begin{remark}
It is worth noting that the matrix $F$ and the vector $\mathcal{L}$ of the truncated model (\ref{eq: truncated model}) are identical to the ones obtained in \citep{lhachemi2021predictor} in the case of an input delay (instead of an output delay). However, the reduced model derived in \citep{lhachemi2021predictor} is delay-free and valid for all $t \geq 0$, essentially because the predictor components manage to completely compensate the input delay. This is not the case in the output delay setting studied in this paper due to the delayed residue of measurement $\zeta(t-h)$.  Conversely, since no input delay appears in the original PDE dynamics (\ref{eq: PDE}), the dynamics of the coefficients of projection (\ref{eq: dynamics w_n}) are delay-free. This is in contrast with the input delay setting studied in \citep{lhachemi2021predictor} where the dynamics of the modes present a time delay.
\end{remark}

\subsection{Main stability result}

We are now in position to state the main result of this section.

\begin{theorem}\label{thm1}
Let $\theta_1 \in (0,\pi/2]$, $\theta_2 \in [0,\pi/2]$, $p \in\mathcal{C}^2([0,1])$ with $p > 0$, and $\tilde{q} \in\mathcal{C}^0([0,1])$. Let $q \in\mathcal{C}^0([0,1])$ and $q_c \in\R$ be such that (\ref{eq: writting of tilde_q}) holds. Let $\delta > 0$ and $N_0 \geq 1$ be such that $-\lambda_n + q_c < - \delta$ for all $n \geq N_0 + 1$. Let $K\in\R^{1 \times N_0}\backslash\{0\}$ and $L\in\R^{N_0}$ be such that $A_0 + \mathfrak{B}_0 K$ and $A_0 - L C_0$ are Hurwitz with eigenvalues that have a real part strictly less than $-\delta<0$. Let $h > 0$ be given. For a given $N \geq N_0 +1$, assume that there exist $P \succ 0$, $\alpha>1$, and $\beta,\gamma > 0$ such that 
\begin{equation}\label{eq: thm1 - constraints}
\Theta_1 \preceq 0 ,\quad \Theta_2 \leq 0
\end{equation}
where
\begin{subequations}
\begin{align}
\Theta_1 & = \begin{bmatrix} F^\top P + P F + 2 \delta P + \alpha\gamma \Vert \mathcal{R}_N a \Vert_{L^2}^2 \tilde{K}^\top \tilde{K} & P \mathcal{L} \\ \mathcal{L}^\top P & - \beta e^{-2\delta h} \end{bmatrix} + \alpha\gamma \Vert \mathcal{R}_N b \Vert_{L^2}^2 E^\top E \label{eq: def theta1 Dirichlet} \\
\Theta_2 & = 2\gamma\left\{ - \left( 1 - \frac{1}{\alpha} \right) \lambda_{N+1}+q_c + \delta \right\} + \beta M_\phi \label{eq: def theta2 Dirichlet} 
\end{align}
\end{subequations}
where $M_\phi = \sum_{n \geq N+1} \frac{\vert \phi_n(0) \vert^2}{\lambda_n} < +\infty$. Then there exists a constant $M > 0$ such that for any initial condition $z_0 \in H^2(0,1)$ and $\hat{z}_n(0) \in\R$ so that $c_{\theta_1} z_0(0) - s_{\theta_1} z_0'(0) = 0$ and $c_{\theta_2} z_0(1) + s_{\theta_2} z_0'(1) = u_0 = K \hat{Z}_A^{N_0}(0)$, and any Lipschitz continuous $y_0 \in \mathcal{C}^0([-h,0])$ so that $y_0(0) = z_0(0)$, the trajectories of the closed-loop system composed of the plant (\ref{eq: PDE}), the delayed Dirichlet measurement (\ref{eq: system output Dirichlet}), and the controller (\ref{eq: controller - Dirichlet}-\ref{eq: command input - Dirichlet}) satisfy 
\begin{align}
& \Vert z(t,\cdot) \Vert_{H^1}^2 + \sum_{n = 1}^{N} \hat{z}_n(t)^2 \leq M e^{-2\delta t} \left( \Vert z_0 \Vert_{H^1}^2 + \sum_{n = 1}^{N} \hat{z}_n(0)^2 + u_0^2 + \Vert y_0 \Vert_\infty^2 \right) \label{eq: thm1 - stability estimate}
\end{align}
for all $t \geq 0$. Moreover, for any given $h > 0$, the constraints (\ref{eq: thm1 - constraints}) are always feasible for $N$ selected large enough.
\end{theorem}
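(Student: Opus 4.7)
The plan is to analyze, for $t \geq h$, the exponential decay of
\begin{equation*}
V(t) = X(t)^\top P X(t) + \gamma \sum_{n \geq N+1} \lambda_n w_n(t)^2 + \beta \int_{t-h}^{t} e^{2\delta(s-t)} \zeta(s)^2 \,\mathrm{d}s.
\end{equation*}
The first term handles the finite-dimensional truncated model (\ref{eq: truncated model}); the weighted modal tail controls the $H^1$-part of the PDE via the equivalence (\ref{eq: inner product Af and f}); and the integral term is designed so that its time derivative produces the off-diagonal Schur entry $-\beta e^{-2\delta h}\zeta(t-h)^2$ appearing in $\Theta_1$, together with a compensating $+\beta\zeta(t)^2$ that will be absorbed into the tail through the Cauchy--Schwarz bound $\zeta(t)^2 \leq M_\phi \sum_{n\geq N+1}\lambda_n w_n(t)^2$, producing the scalar inequality $\Theta_2 \leq 0$.

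\textbf{Computation of $\dot V + 2\delta V$.} The truncated model (\ref{eq: truncated model}) yields $X^\top(F^\top P + PF + 2\delta P)X + 2X^\top P\mathcal{L}\zeta(t-h)$. For the tail, differentiating via (\ref{eq: dynamics w_n}) for $n \geq N+1$ and substituting $u = \tilde K X$, $v = E\tilde X$ from (\ref{eq: derivative v of command input u}) generates cross terms of the form $2\gamma u \sum_{n\geq N+1}\lambda_n a_n w_n$ and $2\gamma v\sum_{n\geq N+1}\lambda_n b_n w_n$, which I handle by Cauchy--Schwarz followed by Young's inequality with parameter $\alpha > 1$, yielding bounds $\alpha\gamma\|\mathcal{R}_N a\|_{L^2}^2 u^2 + (\gamma/\alpha)\sum \lambda_n^2 w_n^2$ and its analogue for $v$. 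The $(2\gamma/\alpha)\lambda_n^2 w_n^2$ remainder is absorbed into the $-2\gamma\lambda_n\cdot\lambda_n w_n^2$ stabilizing term, and the monotonicity $\lambda_n \geq \lambda_{N+1}$ for $n \geq N+1$ produces the $-2\gamma(1-1/\alpha)\lambda_{N+1}$ coefficient in $\Theta_2$. Combining everything, (\ref{eq: thm1 - constraints}) forces
\begin{equation*}
\dot V(t) + 2\delta V(t) \leq \tilde X(t)^\top \Theta_1 \tilde X(t) + \Theta_2 \sum_{n\geq N+1}\lambda_n w_n(t)^2 \leq 0,
\end{equation*}
so $V(t) \leq V(h)e^{-2\delta(t-h)}$ for $t \geq h$.

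\textbf{From $V$-decay to (\ref{eq: thm1 - stability estimate}).} Using the change of variable (\ref{eq: change of variable}) together with the equivalence (\ref{eq: inner product Af and f}) applied to $w$, and $u(t)=\tilde K X(t)$, I bound $\|z(t,\cdot)\|_{H^1}^2 + \sum_{n=1}^N \hat z_n(t)^2$ by a constant multiple of $V(t)$. On $[0,h]$ the well-posedness of Remark~\ref{rmk: WP} together with a variation-of-constants formula and Gronwall's inequality provide a bound on the closed-loop trajectory by a constant multiple of $\|z_0\|_{H^1}^2 + \sum \hat z_n(0)^2 + u_0^2 + \|y_0\|_\infty^2$; absorbing $e^{2\delta h}$ into a global constant $M$ then delivers (\ref{eq: thm1 - stability estimate}) for all $t \geq 0$.

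\textbf{LMI feasibility for large $N$.} As $N \to \infty$ one has $\|\mathcal{R}_N a\|_{L^2}, \|\mathcal{R}_N b\|_{L^2}, M_\phi \to 0$ while $\lambda_{N+1} \to +\infty$, so $\Theta_2 \leq 0$ becomes automatic once $\lambda_{N+1}$ is large enough. For $\Theta_1$ I pick $P = \mathrm{diag}(P_0, P_0', P_1, P_1') \succ 0$ block-diagonal, with $P_0, P_0'$ fixed solutions of the Lyapunov equations for the Hurwitz matrices $A_0 + \mathfrak{B}_0 K$ and $A_0 - LC_0$ (whose spectra lie in $\{\operatorname{Re}(\cdot)<-\delta\}$) and $P_1, P_1'$ proportional to the identity with a small tunable scaling. \emph{This is the main obstacle:} one must show via successive Schur-complement arguments that the off-diagonal couplings $e^{A_0 h}LC_0$, $e^{A_0 h}L\tilde C_1$, $\tilde{\mathfrak{B}}_1 K$, the $P\mathcal{L}$ column, and the residue-based perturbations $\alpha\gamma\|\mathcal{R}_N a\|_{L^2}^2\tilde K^\top\tilde K$, $\alpha\gamma\|\mathcal{R}_N b\|_{L^2}^2 E^\top E$ can all be dominated by the increasingly strong stability of $A_1$ (whose eigenvalues diverge to $-\infty$) and the fixed stability margins of $A_0+\mathfrak{B}_0 K$ and $A_0-LC_0$, provided the scaling of $P_1, P_1'$ and the free scalar $\beta$ are tuned in the right order as $N$ grows. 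The uniform boundedness of $\tilde C_1$ (through $\|\tilde C_1\|^2 \leq M_\phi$) and $\tilde{\mathfrak{B}}_1$ (through $\beta_n/\lambda_n = O(1/\sqrt{\lambda_n})$) combined with the vanishing residue factors is the key ingredient that makes the argument succeed for $N$ sufficiently large.
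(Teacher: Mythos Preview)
Your Lyapunov analysis is essentially identical to the paper's: same functional $V$, same Young/Cauchy--Schwarz estimates on the cross terms and on $\zeta^2$, same conclusion $\dot V + 2\delta V \leq \tilde X^\top \Theta_1 \tilde X + \Theta_2 \sum_{n\geq N+1}\lambda_n w_n^2 \leq 0$ for $t>h$, and the same strategy of estimating the trajectory on $[0,h]$ by well-posedness. One point you gloss over: passing from the decay of $V$ back to a bound on $\sum_{n=1}^N \hat z_n(t)^2$ requires inverting the Artstein transformations (\ref{eq: controller - Dirichlet - Artstein}) and (\ref{eq: Dirichlet - Artstein bis}), since $X$ contains $\hat Z_A^{N_0}$ and $\tilde Z_A^{N-N_0}$ rather than the raw observer states; the paper explicitly invokes this step.

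The genuine difference, and the weak point of your proposal, is the feasibility argument. The paper does \emph{not} take $P$ block-diagonal. It applies the technical lemma in the Appendix to $F+\delta I$ and obtains a (generally full) $P\succ 0$ with $F^\top P + PF + 2\delta P = -I$ and $\|P\|=O(1)$ uniformly in $N$. With this $P$ in hand, the paper simply sets $\alpha>1$ fixed, $\beta=\sqrt{N}$, $\gamma=1/N$, and checks $\Theta_1\preceq 0$ by a single Schur complement (the $(2,2)$ entry $-\beta e^{-2\delta h}\to-\infty$, the perturbation terms vanish because $\gamma\to 0$, $\|E\|=O(1)$, $\|P\mathcal L\|=O(1)$), while $\Theta_2\to-\infty$ is immediate.

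Your block-diagonal ansatz $P=\mathrm{diag}(P_0,P_0',P_1,P_1')$ faces an obstacle you do not address: the $(1,2)$ block of $F$, namely $e^{A_0 h}LC_0$, couples two \emph{fixed-size} $N_0\times N_0$ blocks and does not vanish as $N\to\infty$. With $P_0$ and $P_0'$ both ``fixed solutions of the Lyapunov equations'', the off-diagonal entry $P_0 e^{A_0 h}LC_0$ of $F^\top P+PF$ is a fixed nonzero matrix that cannot be absorbed by tuning $P_1,P_1',\beta$ alone; you would additionally need a relative scaling between $P_0$ and $P_0'$ (e.g.\ $P_0'=\mu Q_0'$ with $\mu$ large), or else abandon block-diagonality on those two blocks. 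This is precisely what the lemma in the Appendix handles in one stroke, and it is why the paper's route is both shorter and complete.
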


\begin{proof} 
Let $V(t) = V_0(t)+V_1(t)$ be defined for $t \geq h$ by
\begin{subequations}\label{eq: Lyapunov function H1 norm}
\begin{align}
V_0(t) & = X(t)^\top P X(t) + \gamma \sum_{n \geq N+1} \lambda_n w_n(t)^2 \\
V_1(t) & = \beta \int_{(t-h)^+}^t e^{-2 \delta (t-s)} \zeta(s)^2 \mathrm{d}s 
\end{align}
\end{subequations}
where $(t-h)^+ = \max(t-h,0)$. The computation of the time derivative of $V$ along the system trajectories (\ref{eq: dynamics w_n}) and (\ref{eq: truncated model}) for $t > h$ gives 
\begin{align*}
& \dot{V} \leq \tilde{X}^\top \begin{bmatrix} F^\top P + P F & P \mathcal{L} \\ \mathcal{L}^\top P & - \beta e^{-2\delta h} \end{bmatrix} \tilde{X} - 2\delta V_1 + \beta \zeta^2 \\
& \quad + 2\gamma \sum_{n \geq N+1} \lambda_n (-\lambda_n + q_c) w_n^2 + 2\gamma \sum_{n \geq N+1} \lambda_n \{ a_n u + b_n v \} w_n .
\end{align*}
Using (\ref{eq: derivative v of command input u}) and invoking Young inequality, we infer for any $\alpha > 0$ that
\begin{equation*}
2 \sum_{n \geq N+1} \lambda_n a_n u w_n 
\leq \frac{1}{\alpha} \sum_{n \geq N+1} \lambda_n^2 w_n^2 + \alpha \Vert \mathcal{R}_N a \Vert_{L^2}^2 X^\top \tilde{K}^\top \tilde{K} X
\end{equation*}
and
\begin{equation*}
2 \sum_{n \geq N+1} \lambda_n b_n v w_n 
\leq \frac{1}{\alpha} \sum_{n \geq N+1} \lambda_n^2 w_n^2 + \alpha \Vert \mathcal{R}_N b \Vert_{L^2}^2 \tilde{X}^\top E^\top E \tilde{X} .
\end{equation*}
The above estimates imply 
\begin{align*}
\dot{V} + 2 \delta V 
& \leq \tilde{X}^\top \Theta_1 \tilde{X} + \beta \zeta^2 + 2\gamma \sum_{n \geq N+1} \lambda_n \left\{ - \left( 1 - \frac{1}{\alpha} \right) \lambda_n + q_c + \delta \right\} w_n^2 .
\end{align*}
Since, by definition, $\zeta = \sum_{n \geq N+1} w_n \phi_n(0)$ we obtain from Cauchy-Schwarz inequality that $\beta \zeta^2 \leq \beta M_\phi \sum_{n \geq N+1} \lambda_n w_n^2$. Hence we have that
\begin{equation}\label{eq: estimate dotV+deltaV}
\dot{V} + 2 \delta V \leq \tilde{X}^\top \Theta_1 \tilde{X} + \sum_{n \geq N+1} \lambda_n \Gamma_n w_n^2 
\end{equation}
where $\Gamma_n = 2 \gamma \left\{ - \left( 1 - \frac{1}{\alpha} \right) \lambda_n + q_c + \delta \right\} + \beta M_\phi$. Since $\alpha > 1$, we observe that $\Gamma_n \leq \Gamma_{N+1} = \Theta_2$ for all $n \geq N+1$. Owing to (\ref{eq: thm1 - constraints}) we deduce that $\dot{V} + 2 \delta V \leq 0$ for all $t > h$. Hence $V(t) \leq e^{-2\delta(t-h)}V(h)$ for all $t \geq h$.  Using standard arguments similar to the ones reported in the proof of~\cite[Thm.~6.3.3]{pazy2012semigroups} to estimate the trajectories of the closed-loop system on the time interval $[0,h]$, we infer the existence of a constant $c > 0$, independent of the initial conditions, such that 
$$\sum_{n \geq 1} \lambda_n w_n(t)^2 + \sum_{n = 1}^N \hat{z}_n(t)^2 \leq c \left( \sum_{n \geq 1} \lambda_n w_n(0)^2 + \sum_{n = 1}^N \hat{z}_n(0)^2 + u_0^2 + \Vert y_0 \Vert_\infty^2 \right)$$ 
for all $t \in [0,h]$. The claimed stability estimate (\ref{eq: thm1 - stability estimate}) is now obtained from the definition of $V$, the estimates (\ref{eq: inner product Af and f}), and by invoking the Artstein transformations (\ref{eq: controller - Dirichlet - Artstein}) and (\ref{eq: Dirichlet - Artstein bis}).

It remains to show that the constraints (\ref{eq: thm1 - constraints}) are feasible provided the dimension $N \geq N_0 + 1$ of the observer is selected sufficiently large. First, applying the Lemma reported in Appendix to the matrix $F+\delta I$, we infer for any $N \geq N_0+1$ the existence of a matrix $P \succ 0$ so that $F^\top P + P F + 2 \delta P = -I$ and $\Vert P \Vert = O(1)$ as $N \rightarrow +\infty$. Let $\alpha > 1$ be arbitrarily fixed. For any given $N \geq N_0+1$ we set $\beta = \sqrt{N} > 0$ and $\gamma = 1/N > 0$. In this case, we observe that $\Theta_2 \rightarrow -\infty$ as $N \rightarrow + \infty$ showing that $\Theta_2 \leq 0$ for $N$ large enough. Moreover, since $\Vert \tilde{K} \Vert = \Vert K \Vert$ and $\Vert \mathcal{L} \Vert$ are independent of $N$ while $\Vert P \Vert = O(1)$ and $\Vert E \Vert = O(1)$ as $N \rightarrow + \infty$, the application of the Schur complement shows that $\Theta_1 \preceq 0$ for sufficiently large $N \geq N_0 +1$. This completes the proof.  
\end{proof}

\begin{remark}
For a given $N \geq N_0 +1$ and fixing arbitrarily the value of $\alpha > 1$, the constraints (\ref{eq: thm1 - constraints}) now take the form of LMIs. Moreover, following the proof of Theorem~\ref{thm1}, this latter LMI formulation remains feasible provided $N$ is selected large enough.
\end{remark}

\section{Case of a delayed Neumann measurement}\label{sec: Neumann}

We address in this section the output feedback stabilization of the reaction-diffusion PDE described by (\ref{eq: PDE}) for $\theta_1 \in [0,\pi/2)$ with delayed Neumann measurement (\ref{eq: system output Neumann}).

\subsection{Control strategy}

Let $\delta > 0$ and $N_0 \geq 1$ be such that $-\lambda_n + q_c < - \delta < 0$ for all $n \geq N_0 + 1$. Let $N \geq N_0 + 1$ be arbitrarily fixed and that will be specified later. The observer dynamics is described for $t \geq 0$ by 
\begin{subequations}\label{eq: controller - Neumann}
\begin{align}
\hat{w}_n(t) & = \hat{z}_n(t) + b_n u(t-h) \\
\dot{\hat{z}}_n(t) & = (-\lambda_n+q_c) \hat{z}_n(t) + \beta_n u(t-h) \label{eq: controller 1 - Neumann} \\
& \phantom{=}\; - l_n \left\{ \sum_{k=1}^N \hat{w}_k(t) \phi_k'(0) - y_N(t) \right\}  ,\; 1 \leq n \leq N_0 \nonumber \\
\dot{\hat{z}}_n(t) & = (-\lambda_n+q_c) \hat{z}_n(t) + \beta_n u(t-h) ,\; N_0+1 \leq n \leq N \label{eq: controller 2 - Neumann}
\end{align}
\end{subequations}
where $l_n \in\R$ are the observer gains. The command input is then defined based on the Artstein transformation (\ref{eq: controller - Dirichlet - Artstein}) and the feedback (\ref{eq: command input - Dirichlet}). The well-posedness of the resulting closed-loop system follows the same arguments that the ones reported in Remark~\ref{rmk: WP}.

\subsection{Truncated model for stability analysis}

Proceeding as in Subsection~\ref{subsec: truncated model} while replacing the definition of $\zeta$, $\tilde{e}_n$, $C_0$ and $\tilde{C}_1$ by the following: $\zeta = \sum_{n \geq N+1} w_n \phi_n'(0)$, $\tilde{e}_n = \lambda_n e_n$, $C_0 = \begin{bmatrix} \phi_1'(0) & \ldots & \phi_{N_0}'(0) \end{bmatrix}$, and $\tilde{C}_1 = \begin{bmatrix} \frac{\phi_{N_0 +1}'(0)}{\lambda_{N_0 +1}} & \ldots & \frac{\phi_{N}'(0)}{\lambda_{N}} \end{bmatrix}$, we infer that the representation (\ref{eq: truncated model}) holds for all $t \geq h$.

\subsection{Main stability result}

We now state the main result of this section.

\begin{theorem}\label{thm2}
Let $\theta_1 \in [0,\pi/2)$, $\theta_2 \in [0,\pi/2]$, $p \in\mathcal{C}^2([0,1])$ with $p > 0$, and $\tilde{q} \in\mathcal{C}^0([0,1])$. Let $q \in\mathcal{C}^0([0,1])$ and $q_c \in\R$ be such that (\ref{eq: writting of tilde_q}) holds. Let $\delta > 0$ and $N_0 \geq 1$ be such that $-\lambda_n + q_c < - \delta$ for all $n \geq N_0 + 1$. Let $K\in\R^{1 \times N_0}\backslash\{0\}$ and $L\in\R^{N_0}$ be such that $A_0 + \mathfrak{B}_0 K$ and $A_0 - L C_0$ are Hurwitz with eigenvalues that have a real part strictly less than $-\delta<0$. Let $h > 0$ be given. For a given $N \geq N_0 +1$, assume that there exist $\epsilon\in(0,1/2]$, $P \succ 0$, $\alpha>1$, and $\beta,\gamma > 0$ such that 
\begin{equation}\label{eq: thm2 - constraints}
\Theta_1 \preceq 0 ,\quad \Theta_2 \leq 0 ,\quad \Theta_3 \geq 0
\end{equation}
where $\Theta_1$ is defined by (\ref{eq: def theta1 Dirichlet})
while
\begin{subequations}
\begin{align}
\Theta_2 & = 2\gamma\left\{ - \left( 1 - \frac{1}{\alpha} \right) \lambda_{N+1}+q_c + \delta \right\} + \beta M_\phi(\epsilon) \lambda_{N+1}^{1/2+\epsilon} \\
 \Theta_3 & = 2 \gamma \left( 1 - \frac{1}{\alpha} \right) - \frac{\beta M_\phi(\epsilon)}{\lambda_{N+1}^{1/2-\epsilon}}
\end{align}
\end{subequations}
where $M_\phi(\epsilon) = \sum_{n \geq N+1} \frac{\vert \phi'_n(0) \vert^2}{\lambda_n^{3/2+\epsilon}} < +\infty$. Then there exists a constant $M > 0$ such that for any initial condition $z_0 \in H^2(0,1)$ and $\hat{z}_n(0) \in\R$ so that $c_{\theta_1} z_0(0) - s_{\theta_1} z_0'(0) = 0$ and $c_{\theta_2} z_0(1) + s_{\theta_2} z_0'(1) = u_0 = K \hat{Z}_A^{N_0}(0)$, and any Lipschitz continuous $y_0 \in \mathcal{C}^0([-h,0])$  so that $y_0(0) = z_{0,x}(0)$, the trajectories of the closed-loop system composed of the plant (\ref{eq: PDE}), the delayed Neumann measurement (\ref{eq: system output Neumann}), and the controller composed of (\ref{eq: controller - Neumann}) and (\ref{eq: controller - Dirichlet - Artstein}-\ref{eq: command input - Dirichlet}) satisfy 
\begin{align}
& \Vert z(t,\cdot) \Vert_{H^1}^2 + \sum_{n = 1}^{N} \hat{z}_n(t)^2 \leq M e^{-2\delta t} \left( \Vert \mathcal{A} w_0 \Vert_{L^2}^2 + \sum_{n = 1}^{N} \hat{z}_n(0)^2 + u_0^2 + \Vert y_0 \Vert_\infty^2 \right) \label{eq: thm2 - stability estimate}
\end{align}
for all $t \geq 0$. Moreover, for any given $h > 0$, the constraints (\ref{eq: thm2 - constraints}) are always feasible for $N$ selected large enough.
\end{theorem}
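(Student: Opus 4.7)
The plan is to mirror the proof of Theorem~\ref{thm1}, using the same Lyapunov candidate $V = V_0 + V_1$ defined in (\ref{eq: Lyapunov function H1 norm}), where now the residue of measurement is $\zeta = \sum_{n \geq N+1} w_n \phi_n'(0)$. Since the truncated model (\ref{eq: truncated model}) takes exactly the same form, every computation preceding the estimate of $\beta \zeta^2$ carries over verbatim, leading up to
\begin{equation*}
\dot V + 2\delta V \leq \tilde X^\top \Theta_1 \tilde X + \beta \zeta^2 + 2\gamma \sum_{n \geq N+1} \lambda_n \left\{ - \left( 1 - \frac{1}{\alpha} \right) \lambda_n + q_c + \delta \right\} w_n^2 .
\end{equation*}
The only genuine departure from the Dirichlet setting occurs in the treatment of $\beta \zeta^2$: since $\phi_n'(0) = O(\sqrt{\lambda_n})$, the naive Cauchy-Schwarz used in Theorem~\ref{thm1} fails because $\sum |\phi_n'(0)|^2/\lambda_n$ diverges. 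I would therefore apply Cauchy-Schwarz with the heavier weight $\lambda_n^{3/2+\epsilon}$, yielding $\zeta^2 \leq M_\phi(\epsilon) \sum_{n \geq N+1} \lambda_n^{3/2+\epsilon} w_n^2$, whose right-hand side is well-defined owing to the summability $|\phi_n'(0)|^2/\lambda_n^{3/2+\epsilon} = O(n^{-1-2\epsilon})$.

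Substituting back, the tail contribution rewrites as $\sum_{n \geq N+1} \lambda_n g(\lambda_n) w_n^2$ with $g(\lambda) = 2\gamma\{-(1-1/\alpha)\lambda + q_c + \delta\} + \beta M_\phi(\epsilon) \lambda^{1/2+\epsilon}$, and the task reduces to showing $g(\lambda_n) \leq 0$ for every $n \geq N+1$. By construction, $g(\lambda_{N+1}) = \Theta_2 \leq 0$. Differentiating, $g'(\lambda) = -2\gamma(1-1/\alpha) + \beta M_\phi(\epsilon)(1/2+\epsilon) \lambda^{-1/2+\epsilon}$; the restriction $\epsilon \in (0,1/2]$ simultaneously yields $(1/2+\epsilon) \leq 1$ and the non-increasing character of $\lambda \mapsto \lambda^{-1/2+\epsilon}$ on $[\lambda_{N+1},+\infty)$, so $g'(\lambda) \leq -\Theta_3 \leq 0$ thanks to the third LMI condition. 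Hence $g$ is non-increasing on $[\lambda_{N+1},+\infty)$ and $g(\lambda_n) \leq g(\lambda_{N+1}) \leq 0$. Combined with $\Theta_1 \preceq 0$, this delivers $\dot V + 2\delta V \leq 0$ for $t > h$.

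The remainder of the argument follows the end of the proof of Theorem~\ref{thm1}: one integrates to obtain $V(t) \leq e^{-2\delta(t-h)} V(h)$, estimates $V(h)$ in terms of the initial data through the semigroup framework on $[0,h]$, and transfers the result back via the Artstein transformations. The occurrence of $\|\mathcal{A} w_0\|_{L^2}^2$ in (\ref{eq: thm2 - stability estimate}), in place of $\|z_0\|_{H^1}^2$, is caused by the fact that bounding the Neumann residue $\zeta$ in the transient $[0,h]$ requires $D(\mathcal{A})$-regularity of the initial homogeneous state. Concerning feasibility of (\ref{eq: thm2 - constraints}) for $N$ large, the scheme of Theorem~\ref{thm1} applies: the Appendix lemma provides $P \succ 0$ with $\|P\| = O(1)$ as $N \to +\infty$; fixing $\alpha > 1$ and $\epsilon \in (0,1/2]$, a balanced scaling such as $\beta = 1/\sqrt{N}$ and $\gamma = 1/N$ renders $\Theta_2$ strongly negative and $\Theta_3$ strictly positive for large $N$ (using $M_\phi(\epsilon) = O(N^{-2\epsilon})$ and $\lambda_{N+1} \sim N^2$), while $\Theta_1 \preceq 0$ follows by Schur complement. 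The main obstacle, and the precise reason the exponent $\epsilon$ is introduced, is to reconcile the conflicting requirements placed on the ratio $\beta/\gamma$ by $\Theta_2$ and $\Theta_3$; the monotonicity step, which crucially exploits $\epsilon \in (0,1/2]$, is exactly what makes the two constraints simultaneously feasible as $N \to +\infty$.
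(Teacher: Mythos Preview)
Your Lyapunov argument for $t>h$ is correct and matches the paper's proof. The only stylistic difference is in how you show $\Gamma_n \leq \Theta_2$: you differentiate $g(\lambda)$ and bound $g'(\lambda) \leq -\Theta_3$, whereas the paper uses the algebraic inequality $\lambda_n^{1/2+\epsilon} = \lambda_n/\lambda_n^{1/2-\epsilon} \leq \lambda_n/\lambda_{N+1}^{1/2-\epsilon}$ to obtain the linear upper bound $\Gamma_n \leq -\Theta_3 \lambda_n + 2\gamma(q_c+\delta)$. Both routes are valid and essentially equivalent.

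There is, however, a genuine gap in your feasibility argument. Your proposed scaling $\beta = 1/\sqrt{N}$, $\gamma = 1/N$ does make $\Theta_2 \to -\infty$ and $\Theta_3 > 0$, but it \emph{breaks} $\Theta_1 \preceq 0$. After invoking the Appendix lemma to get $F^\top P + PF + 2\delta P = -I$ with $\Vert P\Vert = O(1)$, the Schur complement of $\Theta_1$ with respect to the $(2,2)$ block $-\beta e^{-2\delta h} + O(\gamma)$ produces
\[
-I + O(\gamma) + \frac{1}{\beta e^{-2\delta h} + O(\gamma)} \, (P\mathcal{L})(P\mathcal{L})^\top ,
\]
and the last term is a rank-one positive semidefinite matrix of norm $O(1/\beta)$. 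With $\beta = 1/\sqrt{N}$ this contribution is $O(\sqrt{N})$ and overwhelms $-I$, so $\Theta_1 \not\preceq 0$. The Schur-complement step requires $1/\beta \to 0$, i.e.\ $\beta$ must \emph{grow}. The paper resolves the competing demands by taking $\epsilon = 1/8$, $\beta = N^{1/8}$, $\gamma = N^{-3/16}$: then $1/\beta = N^{-1/8} \to 0$ handles $\Theta_1$, while $\gamma \lambda_{N+1} \sim N^{29/16}$ still dominates $\beta M_\phi(\epsilon)\lambda_{N+1}^{1/2+\epsilon} \sim N^{9/8}$ in $\Theta_2$, and $\gamma \sim N^{-3/16}$ dominates $\beta M_\phi(\epsilon)/\lambda_{N+1}^{1/2-\epsilon} \sim N^{-7/8}$ in $\Theta_3$.

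A minor point: on the transient interval $[0,h]$ the paper is more specific than your sketch, controlling $\sum_{n\geq 1}\lambda_n^{2\alpha_0} w_n(t)^2$ for some $\alpha_0 \in (3/4,1)$ via semigroup estimates; this is what allows the Neumann residue $\zeta$ to be bounded on $[0,h]$ and is indeed the origin of the $\Vert \mathcal{A} w_0\Vert_{L^2}^2$ term.
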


\begin{proof}
Let $V(t) = V_0(t)+V_1(t)$ for $t \geq h$ be defined by (\ref{eq: Lyapunov function H1 norm}). The first part of the proof follows the same lines that the one of Theorem~\ref{thm1}. However, since $\zeta$ is now defined by $\zeta = \sum_{n \geq N+1} w_n \phi_n'(0)$, its estimate is replaced by the following: $\beta \zeta^2 \leq \beta M_\phi(\epsilon) \sum_{n \geq N+1} \lambda_n^{3/2+\epsilon} w_n^2$. This implies that (\ref{eq: estimate dotV+deltaV}) holds for all $t > h$ with $\Gamma_n = 2\gamma \left\{ - \left( 1 - \frac{1}{\alpha} \right) \lambda_n + q_c +\delta \right\} + \beta M_\phi(\epsilon) \lambda_{n}^{1/2+\epsilon}$. Recalling that $\epsilon\in(0,1/2]$, we obtain for any $n \geq N+1$ that $\lambda_n^{1/2+\epsilon} = \lambda_n/\lambda_n^{1/2-\epsilon} \leq \lambda_n/\lambda_{N+1}^{1/2-\epsilon}$. Since $\Theta_3 \geq 0$, this implies that $\Gamma_n \leq -\Theta_3 \lambda_n + 2\gamma \{ q_c + \delta \} \leq \Gamma_{N+1} = \Theta_2 \leq 0$ for all $n \geq N+1$. The proof of the stability estimate (\ref{eq: thm2 - stability estimate}) is now obtained using similar arguments that the ones reported in the proof of Theorem~\ref{thm1}. We use here in particular for some $\alpha_0 \in (3/4,1)$ the fact that, based on standard arguments similar to the ones reported in the proof of~\cite[Thm.~6.3.3]{pazy2012semigroups}, we have the existence of a constant $c > 0$, independent of the initial conditions, such that
$$\sum_{n \geq 1} \lambda_n^{2\alpha_0} w_n(t)^2 + \sum_{n = 1}^N \hat{z}_n(t)^2 \leq c \left( \sum_{n \geq 1} \lambda_n^{2\alpha_0} w_n(0)^2 + \sum_{n = 1}^N \hat{z}_n(0)^2 + u_0^2 + \Vert y_0 \Vert_\infty^2 \right)$$ 
for all $t \in [0,h]$. Finally, the feasibility of the constraints (\ref{eq: thm2 - constraints}) for $N$ large enough is obtained similarly by setting $\epsilon = 1/8$, $\beta = N^{1/8}$, and $\gamma = 1/N^{3/16}$. 
\end{proof}

\section{Numerical example}\label{sec: numerics}

For numerical illustration of the main results of this paper, we set the parameters $p=1$, $\tilde{q}=-5$, $\theta_1 = \pi/5$, $\theta_2 = 0$ (Dirichlet boundary control), and the input delay $h = 2\,\mathrm{s}$. The resulting reaction-diffusion PDE given by (\ref{eq: PDE}) is open-loop unstable. 

We set the feedback gain $K = -1.6037$. The observer gain is set as $L = 4.0832$ in the case of the delayed Dirichlet measurement (\ref{eq: system output Dirichlet}) while $L = 2.9666$ in the case of the delayed Neumann measurement (\ref{eq: system output Neumann}). 

With fix the prescribed decay rate $\delta = 0.5$. In the case of the Dirichlet measurement (\ref{eq: system output Dirichlet}), the constraints of Theorems~\ref{thm1} are found feasible for an observer of dimension $N = 3$, ensuring the exponential stability of the closed-loop system in $H^1$ norm. Dealing with the case of the Neumann boundary measurement (\ref{eq: system output Neumann}), the constraints of Theorem~\ref{thm2} are found feasible for an observer of dimension $N = 15$, ensuring the exponential decay of the closed-loop system in $H^1$ norm.

We complete this numerical illustration by depicting the closed-loop system behavior in the case of the delayed Dirichlet measurement (\ref{eq: system output Dirichlet}). We set the initial conditions $z_0(x)=5x^2(x-3/4)$ and $y_0(\tau) = 3\cos(10\pi(\tau+h))\sin(3\pi\tau)$ for $\tau \leq 0$, while $\hat{z}_n(0)$ are fixed so that $u_0 = K \hat{Z}_A^{N_0}(0)$. In this setting, the time domain evolution of the closed-loop system is depicted in Fig.~\ref{fig: sim CL}. As predicted by Theorem~\ref{thm1}, we observe the exponential decay of the both state of the PDE and observation error in spite of the $h = 2\,\mathrm{s}$ output delay.

\begin{figure}
     \centering
     	\subfigure[State of the reaction-diffusion system $z(t,x)$]{
		\includegraphics[width=3.5in]{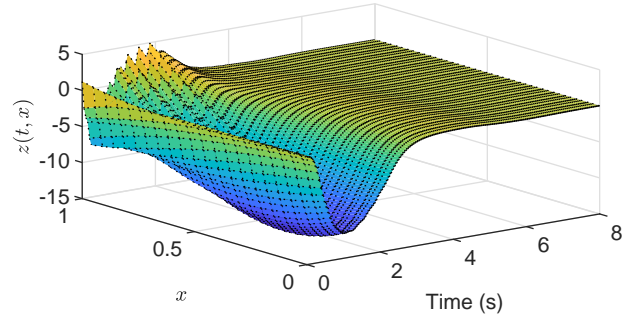}
		}
     	\subfigure[Error of observation $e(t,x) = z(t-h,x) - \hat{z}(t,x)$ for $t \geq h$]{
		\includegraphics[width=3.5in]{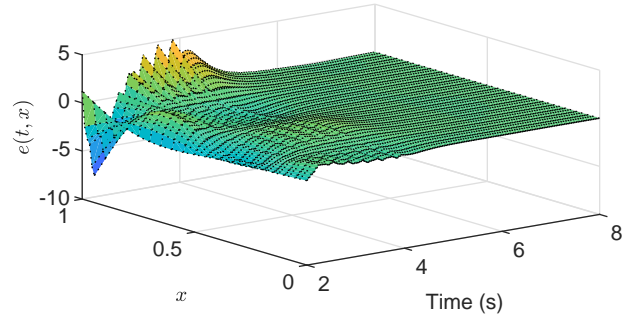}
		}
     	\subfigure[Delayed measurement $y_D(t) = z(t-h,0)$ with delay $h = 2\,\mathrm{s}$]{
		\includegraphics[width=3.5in]{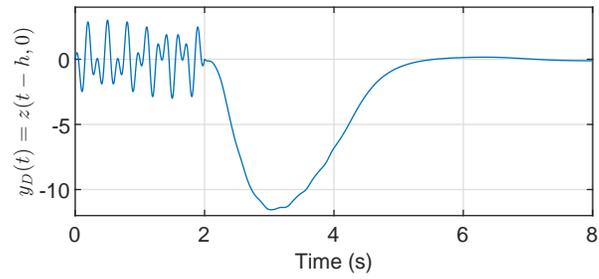}
		\label{fig: sim CL - input}
		}		
     \caption{Time evolution of the closed-loop system for Dirichlet measurement $y_D (t) = z(t-h,0)$ with delay $h = 2\,\mathrm{s}$}
     \label{fig: sim CL}
\end{figure}

\section{Extension to input and output delays}\label{sec: extension}

We briefly discuss in this section how the output feedback boundary stabilization of general 1-D reaction-diffusion PDEs in the presence of both input and output delays can be achieved by merging the techniques developed in this paper for an output delay and the ones reported in \citep{lhachemi2021predictor} for an input delay. We focus the presentation on the Dirichlet measurement but the same procedure can be used to address the case of the Neumann measurement. Consider the reaction-diffusion system with boundary control described by
\begin{subequations}\label{eq: PDE - extension}
\begin{align}
& z_t(t,x) = (p(x) z_x(t,x))_x - \tilde{q}(x) z(t,x) \\
& c_{\theta_1} z(t,0) - s_{\theta_1} z_x(t,0) = 0 \\
& c_{\theta_2} z(t,1) + s_{\theta_2} z_x(t,1) = u(t-h_i) \\
& z(0,x) = z_0(x) 
\end{align}
\end{subequations}
for $t > 0$ and $x \in (0,1)$. The different parameters are defined as in (\ref{eq: PDE}) while $h_i > 0$ is an input delay. We assume that $u(\tau)=0$ for all $\tau \leq 0$. In the case case $\theta_1 \in (0,\pi/2]$, the delayed Dirichlet boundary measurement is defined by 
\begin{equation}\label{eq: system output Dirichlet - extension}
y_D(t) = 
\left\{
\begin{array}{ll}
z(t-h_o,0) , & t \geq h_o \\
y_0(t-h_o) , & 0 \leq t \leq h_o
\end{array}
\right.
\end{equation}
with output delay $h_o > 0$. We introduce $q \in\mathcal{C}^0([0,1])$ and $q_c \in\R$ so that (\ref{eq: writting of tilde_q}) holds. Defining the change of variable
\begin{equation}\label{eq: change of variable - extension}
w(t,x) = z(t,x) - \frac{x^2}{c_{\theta_2} + 2 s_{\theta_2}} u(t-h_i) . 
\end{equation}
we infer that
\begin{equation}\label{eq: link z_n and w_n - extension}
w_n(t) = z_n(t) + b_n u(t-h_i), \quad n \geq 1 .
\end{equation}
The projections of the PDE in $z$ coordinates gives
\begin{equation}\label{eq: dynamics z_n - extension}
\dot{z}_n(t) = (-\lambda_n + q_c) z_n(t) + \beta_n u(t-h_i)
\end{equation}
while, in $w$ coordinates,
\begin{subequations}\label{eq: dynamics w_n - extension}
\begin{align}
\dot{u}(t) & = v(t) \\
\dot{w}_n(t) & = (-\lambda_n + q_c) w_n(t) + a_n u(t-h_i) + b_n v(t-h_i) .
\end{align}
\end{subequations}
The delayed measurement is expressed for $t \geq h_o$ by
\begin{equation}\label{eq: system output - series - extension}
y_D(t) = \sum_{n \geq 1} w_n(t-h_o) \phi_n(0) . 
\end{equation}

Let $\delta > 0$ and $N_0 \geq 1$ be such that $-\lambda_n + q_c < - \delta < 0$ for all $n \geq N_0 + 1$. Let $N \geq N_0 + 1$ be arbitrarily given. The observer dynamics, used to estimate the $N$ first modes $z_n(t-h_o)$ of the PDE at time $t-h_o$, is described for $t \geq 0$ by 
\begin{subequations}\label{eq: controller - Dirichlet - extension}
\begin{align}
\hat{w}_n(t) & = \hat{z}_n(t) + b_n u(t-h_{io}) \\
\dot{\hat{z}}_n(t) & = (-\lambda_n+q_c) \hat{z}_n(t) + \beta_n u(t-h_{io}) \label{eq: controller 1 - Dirichlet - extension} \\
& \phantom{=}\; - l_n \left\{ \sum_{k=1}^N \hat{w}_k(t) \phi_k(0) - y_D(t) \right\}  ,\; 1 \leq n \leq N_0 \nonumber \\
\dot{\hat{z}}_n(t) & = (-\lambda_n+q_c) \hat{z}_n(t) + \beta_n u(t-h_{io}) ,\; N_0+1 \leq n \leq N \label{eq: controller 2 - Dirichlet - extension}
\end{align}
\end{subequations}
where $l_n \in\R$ are the observer gains and $h_{io} = h_i + h_o > 0$. Introducing the predictor component defined by
\begin{equation}\label{eq: controller - Dirichlet - Artstein - extension}
\hat{Z}_A^{N_0} (t) = e^{A_0 h_{io}}  \hat{Z}^{N_0}(t) + \int_{t-{h_{io}}}^t e^{A_0(t-s)} \mathfrak{B}_0 u(s) \,\mathrm{d}s ,
\end{equation}
we define the control input as 
\begin{equation}\label{eq: command input - Dirichlet - extension}
u(t) = K \hat{Z}_A^{N_0}(t) 
\end{equation}
for all $t \geq 0$ where $K\in\R^{1 \times N_0}$ is the feedback gain. Then proceeding as in Subsection~\ref{subsec: truncated model} but with $\tilde{Z}_A^{N-N_0}$ defined by 
\begin{equation}\label{eq: Dirichlet - Artstein bis - extension}
\tilde{Z}_A^{N-N_0} (t) = e^{A_1 h_{io}}  \tilde{Z}^{N-N_0}(t) + \int_{t-h_{io}}^t e^{A_1(t-s)} \tilde{\mathfrak{B}}_1 u(s) \,\mathrm{d}s ,
\end{equation}
we infer that
\begin{equation}\label{eq: truncated model - extension}
\dot{X}(t) = F X(t) + \mathcal{L} \zeta(t-h_o)
\end{equation}
for all $t \geq h_0$ where $X$ is defined by (\ref{eq: truncated model - def X}) and $\zeta = \sum_{n \geq N+1} w_n \phi_n(0)$ while the matrix $F$ and the vector $\mathcal{L}$ are defined by
\begin{equation*}
F =
\begin{bmatrix}
A_0 + \mathfrak{B}_0 K & e^{A_0 h_{io}} LC_0 & 0 & e^{A_0 h_{io}} L\tilde{C}_1 \\
0 & A_0 - L C_0 & 0 & - L\tilde{C}_1 \\
\tilde{\mathfrak{B}}_1 K & 0 & A_1 & 0 \\
0 & 0 & 0 & A_1
\end{bmatrix} , \qquad
\mathcal{L} =
\begin{bmatrix}
e^{A_0 h_{io}} L \\ -L \\ 0 \\ 0
\end{bmatrix} .
\end{equation*}
Defining $\tilde{X}(t) = \mathrm{col}\left( X(t) , \zeta(t-h_o) \right)$, we have $u(t) = \tilde{K} X(t)$ and $v(t) = \dot{u}(t) = K \dot{\hat{Z}}_A^{N_0}(t)$ for all $t \geq 0$ where $\tilde{K} = \begin{bmatrix} K & 0 & 0 & 0 \end{bmatrix}$. Moreover, we also have $\dot{\hat{Z}}_A^{N_0}(t) = E \tilde{X}(t)$ for all $t \geq h_o$ with $E = \begin{bmatrix} A_0 + \mathfrak{B}_0 K & e^{A_0 h_{io}} LC_0 & 0 & e^{A_0 h_{io}} L\tilde{C}_1 & e^{A_0 h_{io}} L \end{bmatrix}$. 

Combining now the approaches developed in this paper and in \citep{lhachemi2021predictor} to handle the output delay $h_o > 0$ appearing in (\ref{eq: truncated model - extension}) and the input delay $h_i > 0$ occurring in (\ref{eq: dynamics w_n - extension}), respectively, we arrive at the following theorem.

\begin{theorem}\label{thm3}
Let $\theta_1 \in (0,\pi/2]$, $\theta_2 \in [0,\pi/2]$, $p \in\mathcal{C}^2([0,1])$ with $p > 0$, and $\tilde{q} \in\mathcal{C}^0([0,1])$. Let $q \in\mathcal{C}^0([0,1])$ and $q_c \in\R$ be such that (\ref{eq: writting of tilde_q}) holds. Let $\delta > 0$ and $N_0 \geq 1$ be such that $-\lambda_n + q_c < - \delta$ for all $n \geq N_0 + 1$. Let $K\in\R^{1 \times N_0}$ and $L\in\R^{N_0}$ be such that $A_0 + \mathfrak{B}_0 K$ and $A_0 - L C_0$ are Hurwitz with eigenvalues that have a real part strictly less than $-\delta<0$. Let $h_i,h_o > 0$ be given. For a given $N \geq N_0 +1$, assume that there exist $P \succ 0$, $Q_1,Q_2 \succeq 0$, $\alpha>1$, and $\beta,\gamma > 0$ such that 
\begin{equation}\label{eq: thm3 - constraints}
\Theta_1 \preceq 0 ,\quad \Theta_2 \leq 0 ,\quad R_1 \preceq 0 ,\quad R_2 \preceq 0
\end{equation}
where
\begin{subequations}
\begin{align}
\Theta_1 & = \begin{bmatrix} F^\top P + P F + 2 \delta P + \tilde{Q}_1 & P \mathcal{L} \\ \mathcal{L}^\top P & - \beta e^{-2\delta h_o} \end{bmatrix} + E^\top Q_2 E  \\
\Theta_2 & = 2\gamma\left\{ - \left( 1 - \frac{1}{\alpha} \right) \lambda_{N+1}+q_c + \delta \right\} + \beta M_\phi \\
R_1 & = - e^{-2\delta h_{i}} Q_1 + \alpha\gamma \Vert \mathcal{R}_N a \Vert_{L^2}^2 K^\top K \\
R_2 & = - e^{-2\delta h_{i}} Q_2 + \alpha\gamma \Vert \mathcal{R}_N b \Vert_{L^2}^2 K^\top K
\end{align}
\end{subequations}
where $\tilde{Q}_1 = \mathrm{diag}(Q_1,0,0,0)$ and $M_\phi = \sum_{n \geq N+1} \frac{\vert \phi_n(0) \vert^2}{\lambda_n} < +\infty$. Then there exists a constant $M > 0$ such that for any initial condition $z_0 \in H^2(0,1)$ and $\hat{z}_n(0) \in\R$ so that $c_{\theta_1} z_0(0) - s_{\theta_1} z_0'(0) = 0$ and $c_{\theta_2} z_0(1) + s_{\theta_2} z_0'(1) = 0$, and any Lipschitz continuous $y_0 \in \mathcal{C}^0([-h_o,0])$ so that $y_0(0) = z_0(0)$, the trajectories of the closed-loop system composed of the plant (\ref{eq: PDE - extension}), the delayed Dirichlet measurement (\ref{eq: system output Dirichlet - extension}), and the controller (\ref{eq: controller - Dirichlet - extension}-\ref{eq: command input - Dirichlet - extension}) with zero control in negative time and zero initial condition for the observer ($u(\tau) = 0$ for $\tau < 0$ and $\hat{z}_n(0) = 0$) satisfy 
\begin{align}
\Vert z(t,\cdot) \Vert_{H^1}^2 + \sum_{n = 1}^{N} \hat{z}_n(t)^2 \leq M e^{-2\delta t} \left( \Vert z_0 \Vert_{H^1}^2 + \Vert y_0 \Vert_\infty^2 \right) \label{eq: thm3 - stability estimate}
\end{align}
for all $t \geq 0$. Moreover, for any given $h_i,h_o > 0$, the constraints (\ref{eq: thm3 - constraints}) are always feasible for $N$ selected large enough.
\end{theorem}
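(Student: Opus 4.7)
The plan is to extend the Lyapunov-Krasovskii (LKF) argument of Theorem~\ref{thm1} with two additional integral terms that absorb the input delay $h_i$ appearing in the projected dynamics (\ref{eq: dynamics w_n - extension}). Specifically, I would work with the functional $V(t) = V_0(t) + V_1(t) + V_u(t) + V_v(t)$, where $V_0$, $V_1$ are defined as in (\ref{eq: Lyapunov function H1 norm}) with $h$ replaced by $h_o$, and
\begin{align*}
V_u(t) & = \int_{t-h_i}^{t} e^{-2\delta(t-s)} \hat{Z}_A^{N_0}(s)^\top Q_1 \hat{Z}_A^{N_0}(s)\,\mathrm{d}s , \\
V_v(t) & = \int_{t-h_i}^{t} e^{-2\delta(t-s)} \dot{\hat{Z}}_A^{N_0}(s)^\top Q_2 \dot{\hat{Z}}_A^{N_0}(s)\,\mathrm{d}s .
\end{align*}
Differentiating these new terms along the trajectories produces, by a standard LKF manipulation, the indefinite boundary contributions $\hat{Z}_A^{N_0}(t)^\top Q_1 \hat{Z}_A^{N_0}(t)$ and $\dot{\hat{Z}}_A^{N_0}(t)^\top Q_2 \dot{\hat{Z}}_A^{N_0}(t)$ at time $t$, together with the strictly negative delayed contributions $-e^{-2\delta h_i} \hat{Z}_A^{N_0}(t-h_i)^\top Q_1 \hat{Z}_A^{N_0}(t-h_i)$ and $-e^{-2\delta h_i}\dot{\hat{Z}}_A^{N_0}(t-h_i)^\top Q_2 \dot{\hat{Z}}_A^{N_0}(t-h_i)$ at time $t-h_i$.

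Reproducing the calculation of Theorem~\ref{thm1} on $V_0 + V_1$ recovers the output-delay quadratic form in $\tilde{X}(t) = \mathrm{col}(X(t),\zeta(t-h_o))$ built from $F$, $\mathcal{L}$, and $\beta e^{-2\delta h_o}$. The essential new ingredient is that Young's inequality applied to the residue cross-products $2\gamma \sum_{n \geq N+1} \lambda_n a_n u(t-h_i) w_n(t)$ and $2\gamma \sum_{n \geq N+1} \lambda_n b_n v(t-h_i) w_n(t)$ now produces delayed quadratic terms $\alpha\gamma \Vert \mathcal{R}_N a \Vert_{L^2}^2 u(t-h_i)^2$ and $\alpha\gamma \Vert \mathcal{R}_N b \Vert_{L^2}^2 v(t-h_i)^2$. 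Using $u(t-h_i) = K \hat{Z}_A^{N_0}(t-h_i)$ and $v(t-h_i) = K\dot{\hat{Z}}_A^{N_0}(t-h_i)$, these combine with the negative boundary contributions coming from $V_u$ and $V_v$ into the quadratic forms $\hat{Z}_A^{N_0}(t-h_i)^\top R_1 \hat{Z}_A^{N_0}(t-h_i)$ and $\dot{\hat{Z}}_A^{N_0}(t-h_i)^\top R_2 \dot{\hat{Z}}_A^{N_0}(t-h_i)$, both non-positive under (\ref{eq: thm3 - constraints}). The non-negative boundary contributions at time $t$ rewrite as $X(t)^\top \tilde{Q}_1 X(t) + \tilde{X}(t)^\top E^\top Q_2 E \tilde{X}(t)$ upon substituting $\dot{\hat{Z}}_A^{N_0}(t) = E\tilde{X}(t)$, which match exactly the extra terms appearing in the extended $\Theta_1$. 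Collecting everything yields $\dot{V} + 2\delta V \leq \tilde{X}^\top \Theta_1 \tilde{X} + \sum_{n \geq N+1} \lambda_n \Gamma_n w_n^2$ plus non-positive delayed quadratic forms, with $\Gamma_n \leq \Theta_2 \leq 0$, hence $\dot{V} + 2\delta V \leq 0$ for $t \geq h_i + h_o$. The claimed stability estimate (\ref{eq: thm3 - stability estimate}) then follows by inverting the two Artstein transformations (\ref{eq: controller - Dirichlet - Artstein - extension}) and (\ref{eq: Dirichlet - Artstein bis - extension}) and bounding the trajectories on the initial interval $[0, h_i + h_o]$ via the Pazy-type mild-solution arguments used in the proofs of Theorems~\ref{thm1} and~\ref{thm2}.

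The hardest part is the feasibility of the enlarged LMI system for $N$ large. The natural choice is to set $Q_1 = e^{2\delta h_i} \alpha\gamma \Vert \mathcal{R}_N a \Vert_{L^2}^2 K^\top K$ and $Q_2 = e^{2\delta h_i} \alpha\gamma \Vert \mathcal{R}_N b \Vert_{L^2}^2 K^\top K$, which makes $R_1 = R_2 = 0$ automatically. Since $\Vert \mathcal{R}_N a \Vert_{L^2}, \Vert \mathcal{R}_N b \Vert_{L^2} \to 0$ as $N \to \infty$, both $\tilde{Q}_1$ and $E^\top Q_2 E$ then appear as vanishing perturbations of the block $F^\top P + PF + 2\delta P$ in $\Theta_1$. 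One may then select $P \succ 0$ from the Appendix lemma applied to $F + \delta I$ to obtain $\Vert P \Vert = O(1)$, set $\beta = \sqrt{N}$ and $\gamma = 1/N$, and conclude via the same Schur-complement argument as in Theorem~\ref{thm1} that $\Theta_1 \preceq 0$ and $\Theta_2 \leq 0$ for $N$ large enough.
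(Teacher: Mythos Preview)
Your proposal is correct and matches the paper's intended approach: the paper gives no explicit proof for Theorem~\ref{thm3}, stating only that it follows by combining the output-delay Lyapunov argument of Theorem~\ref{thm1} with the input-delay technique of \citep{lhachemi2021predictor}, and your two extra LKF terms $V_u$, $V_v$ are precisely the standard mechanism from the latter that produces the $\tilde{Q}_1$, $E^\top Q_2 E$ contributions in $\Theta_1$ and the $R_1$, $R_2$ constraints. Your feasibility argument (saturating $R_1=R_2=0$, then $\beta=\sqrt{N}$, $\gamma=1/N$, $P$ from the Appendix lemma) is also the natural extension of the one in Theorem~\ref{thm1}.
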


\section{Conclusion}\label{sec: conclusion}
This paper solved the problem of output feedback stabilization of 1-D reaction-diffusion PDEs in the presence of an arbitrary output delay. The proposed setting embraces general Dirichlet/Neumann/Robin boundary condition/control along with Dirichlet/Neumann boundary measurement. While the output feedback stabilization of general 1-D reaction-diffusion PDEs was reported in \citep{lhachemi2021predictor} for an arbitrary input delay and in \citep{lhachemi2021boundary} for an arbitrary state delay in the reaction term, the results presented in this paper complete the full picture by addressing the case of an arbitrary output delay. Furthermore, we showed how the combination of the techniques developed in this paper with the ones reported in \citep{lhachemi2021predictor} allows to address the case of simultaneous input and output delays.

It is worth noting that the main results of this paper can be extended in a straightforward manner to any $\theta_1,\theta_2\in[0,\pi)$. This can be achieved by 1) selecting $q$ in (\ref{eq: writting of tilde_q}) sufficiently large positive so that (\ref{eq: inner product Af and f}) holds true; 2) adapt the change of variable formula (\ref{eq: change of variable}) to avoid a possible division by 0 by using $w(t,x) = z(t,x) - \frac{x^\alpha}{c_{\theta_2} + \alpha s_{\theta_2}} u(t)$ where $\alpha > 1$ is selected such that $c_{\theta_2} + \alpha s_{\theta_2} \neq 0$.

\section*{Funding}

The work of C. Prieur has been partially supported by MIAI@Grenoble Alpes (ANR-19-P3IA-0003)

\section{References}

\bibliographystyle{apacite}
\bibliography{interactapasample}

\appendix

\section{Technical lemma}

The following Lemma is an immediate generalization of the result presented in \citep{katz2020constructive}.

\begin{lemma}\label{lem: useful lemma}
Let $n,m,N \geq 1$, $M_{11} \in \R^{n \times n}$ and $M_{22} \in \R^{m \times m}$ Hurwitz, $M_{12} \in \R^{n \times m}$, $M_{14}^N \in\R^{n \times N}$, $M_{24}^N \in\R^{m \times N}$, $M_{31}^N \in\R^{N \times n}$, $M_{33}^N,M_{44}^N \in \R^{N \times N}$, and
\begin{equation*}
F^N = \begin{bmatrix}
M_{11} & M_{12} & 0 & M_{14}^N \\
0 & M_{22} & 0 & M_{24}^N \\
M_{31}^N & 0 & M_{33}^N & 0 \\
0 & 0 & 0 & M_{44}^N
\end{bmatrix} .
\end{equation*}
We assume that there exist constants $C_0 , \kappa_0 > 0$ such that $\Vert e^{M_{33}^N t} \Vert \leq C_0 e^{-\kappa_0 t}$ and $\Vert e^{M_{44}^N t} \Vert \leq C_0 e^{-\kappa_0 t}$ for all $t \geq 0$ and all $N \geq 1$. Moreover, we assume that there exists a constant $C_1 > 0$ such that $\Vert M_{14}^N \Vert \leq C_1$, $\Vert M_{24}^N \Vert \leq C_1$, and $\Vert M_{31}^N \Vert \leq C_1$ for all $N \geq 1$. Then there exists a constant $C_2 > 0$ such that, for any $N \geq 1$, there exists a symmetric matrix $P^N \in\R^{n+m+2N}$ with $P^N \succ 0$ such that $P^N F^N + (F^N)^\top P^N = - I$ and $\Vert P^N \Vert \leq C_2$.
\end{lemma}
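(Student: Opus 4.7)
The plan is to first observe that $F^N$ becomes block upper triangular after permuting its second and third block indices. The resulting similar matrix
\begin{equation*}
\tilde{F}^N = \begin{bmatrix} M_{11} & 0 & M_{12} & M_{14}^N \\ M_{31}^N & M_{33}^N & 0 & 0 \\ 0 & 0 & M_{22} & M_{24}^N \\ 0 & 0 & 0 & M_{44}^N \end{bmatrix} = \begin{bmatrix} A^N & C^N \\ 0 & B^N \end{bmatrix}
\end{equation*}
has itself block triangular diagonal blocks $A^N = \begin{bmatrix} M_{11} & 0 \\ M_{31}^N & M_{33}^N \end{bmatrix}$ and $B^N = \begin{bmatrix} M_{22} & M_{24}^N \\ 0 & M_{44}^N \end{bmatrix}$ with Hurwitz diagonal entries, so $F^N$ itself is Hurwitz for every $N$. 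I would then define
\begin{equation*}
P^N = \int_0^\infty e^{(F^N)^\top t} e^{F^N t}\,\mathrm{d}t,
\end{equation*}
which is automatically symmetric positive definite and satisfies the Lyapunov identity $(F^N)^\top P^N + P^N F^N = -I$ as soon as the integral converges.

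The whole question then reduces to establishing a uniform-in-$N$ exponential bound $\Vert e^{F^N t}\Vert \leq C e^{-\mu t}$ with $C,\mu > 0$ independent of $N$, since this yields $\Vert P^N\Vert \leq C^2/(2\mu)$. Because the permutation matrix is orthogonal, $\Vert e^{F^N t}\Vert = \Vert e^{\tilde{F}^N t}\Vert$, and the block-triangular formula
\begin{equation*}
e^{\tilde{F}^N t} = \begin{bmatrix} e^{A^N t} & \int_0^t e^{A^N(t-s)} C^N e^{B^N s}\,\mathrm{d}s \\ 0 & e^{B^N t} \end{bmatrix}
\end{equation*}
applies, inside which $e^{A^N t}$ and $e^{B^N t}$ are in turn computed by variation of constants from $e^{M_{11}t}$, $e^{M_{22}t}$, $e^{M_{33}^N t}$, $e^{M_{44}^N t}$ convolved with $M_{31}^N$ or $M_{24}^N$. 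The hypotheses furnish fixed exponential bounds on the first two factors (they are $N$-independent and Hurwitz), the uniform exponential bounds $C_0 e^{-\kappa_0 t}$ on the last two, and the uniform norm bounds $C_1$ on the coupling blocks, so each convolution is controlled by an expression of the form $D t^k e^{-\kappa t}$ with $k \in \{0,1\}$ and constants $D, \kappa > 0$ independent of $N$.

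Choosing any $\mu$ strictly smaller than the smallest of the relevant decay rates absorbs the polynomial factor $t^k$ into an $N$-independent multiplicative constant, yielding the required uniform bound on $\Vert e^{F^N t}\Vert$ and hence on $\Vert P^N\Vert$. The main, and rather technical, obstacle is simply the careful tracking of decay rates through the nested convolutions to ensure all multiplicative constants remain independent of $N$; beyond the classical Lyapunov integral construction and the closed-form block-triangular matrix exponential, no additional idea is required.
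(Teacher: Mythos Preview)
The paper does not actually prove this lemma: it is stated in the appendix with the one-line justification that it is ``an immediate generalization of the result presented in \citep{katz2020constructive}'' and no argument is given. Your approach---the Lyapunov integral representation $P^N = \int_0^\infty e^{(F^N)^\top t} e^{F^N t}\,\mathrm{d}t$ combined with a uniform-in-$N$ exponential bound on $\Vert e^{F^N t}\Vert$ obtained by exploiting the nested block-triangular structure revealed after the $(2\leftrightarrow 3)$ block permutation---is correct and is the natural way to carry out such a generalization. One small imprecision: the outer convolution $\int_0^t e^{A^N(t-s)} C^N e^{B^N s}\,\mathrm{d}s$ inherits the $(1+t)$ prefactors from both $e^{A^N t}$ and $e^{B^N t}$, so the resulting polynomial factor can reach degree higher than $1$; but your final remark about shrinking $\mu$ below the minimal decay rate to absorb any polynomial prefactor already covers this, so there is no gap.
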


\end{document}